\DeclareMathAlphabet{\mathcal}{OMS}{cmsy}{m}{n}
\SetMathAlphabet{\mathcal}{bold}{OMS}{cmsy}{b}{n}
\title{\LARGE \bf
Distributed learning for optimal allocation  of synchronous and converter-based generation
}
\author{Taouba Jouini$^{1}$, Zhiyong Sun$^{2}$ 
\thanks{*This work has received funding from the European Research Council (ERC) under the European Union's Horizon 2020 research  and innovation program (grant agreement No: 834142).}%
\thanks{$^{1}$Taouba Jouini is with the Department of Automatic Control, LTH, Lund University, Sweden. $^{2}$Zhiyong Sun is with Department of Electrical Engineering,  Eindhoven University of Technology, the Netherlands.
        E-mails:
        \tt\small taouba.jouini@control.lth.se, z.sun@tue.nl.}}%
\newcommand\oprocendsymbol{\hbox{$\blacksquare$}}
\newcommand\oprocend{\relax\ifmmode\else\unskip\hfill\fi\oprocendsymbol}
\newcommand{\real}[0]{\mathbb R}
\providecommand{\norm}[1]{\lVert#1\rVert}
\newtheorem{theorem}{Theorem}[section]
\newtheorem{definition}[theorem]{Definition}
\newtheorem{assumption}[]{Assumption}
\newcommand\rout{\bgroup\markoverwith{\textcolor{red}{/}}\ULon} 
\begin{document}
\maketitle
\thispagestyle{empty}
\pagestyle{empty}

\begin{abstract}

Motivated by the penetration of converter-based generation into the electrical grid, we revisit the classical log-linear learning algorithm for optimal allocation {of synchronous machines and converters} for mixed power generation. The objective is to assign to each generator unit a type (either synchronous machine or DC/AC converter in closed-loop with droop control), while minimizing the steady state angle deviation relative to an optimum induced by unknown optimal configuration of synchronous and DC/AC converter-based generation. Additionally, we study the robustness of the learning algorithm against a uniform drop in the line susceptances and with respect to a well-defined feasibility region describing admissible power deviations. We show guaranteed probabilistic convergence to maximizers of the perturbed potential function with feasible power flows and demonstrate our theoretical findings via simulative examples of power network with six generation units.
\end{abstract}


\section{INTRODUCTION}
With increased share of renewable energy resources (wind, solar, fuel cells, etc.) in the electrical grid, it is of uttermost importance to understand the ramification of {  penetration of converter-based  generation} on the operation and maintenance of normal grid conditions. Since a large percentage of power system generation will {  withdraw} synchronous machines {  and replace these gradually by power converters} \cite{paolone2020fundamentals}. This motivates the ongoing {  efforts} to {  optimize} the grid performance in {  mixed power generation, i.e., in the presence of synchronous machines and converters} using tools from optimization theory \cite{9116977}.

In particular, the field of game theory has gained more and more attention over the years as it intersects several disciplines. Game theory offers a rich set of model elements, solution concepts, and evolutionary notions. Within the realm of engineering systems, a key element in the use of game theory is to design incentives to obtain desirable behaviors \cite{marden2013distributed, bauso2016game}. 
A game theoretical model is more than just a dynamic feedback system, as each player learns the environment, which in turn learns the player \cite{bauso2016game}.

Different game theoretic concepts have been previously applied to formulate and solve optimization problems in power systems.  For example, in \cite{barreiro2018distributed} a feedback controller is proposed based on population games to achieve frequency regulation and economic efficiency. The authors of \cite{chakraborty2017distributed, 4907240} design distributed control laws for decision process of individual sources in small-scale DC power systems and propose a proportional allocation mechanism using non-cooperative game theory, respectively. 
In \cite{chakraborty2018sharing}, coalitional game approach allows consumers to cooperatively share storage with each other, while minimizing electricity consumption cost. Moreover, game theory has been leveraged for pricing and market mechanisms in power systems \cite{7105430, poolla2020market, forouzandehmehr2014stochastic} as well as security assessment and mitigation of attacks \cite{7127052, alpcan2010network}.

Learning in game theory provides a framework for designing, analyzing and controlling multi-agent systems \cite{fudenberg1998theory} and is well-understood in the literature with guaranteed asymptotic results, for example, convergence to a Nash equilibrium. Learning algorithms have been extensively studied in potential games \cite{monderer1996potential,4782013} and are designed with the goal to implement a prescriptive control approach, where the guaranteed limiting behavior  represents a desirable operating condition \cite{marden2012revisiting}. In particular, log-linear learning originally introduced in \cite{blume1993statistical} is a learning algorithm that ensures that the action profiles that maximize the global objective of the multi-agent system coincide with the potential function maximizers. The inclusion of the noise function enables the players to occasionally make mistakes corresponding to sub-optimal actions. As the noise vanishes, the probability that a player selects a best response or an optimal action goes to one \cite{marden2012revisiting}. 

In this work, our contributions are put together as follows: First, we consider the log-linear learning algorithm in its classical formulation \cite{blume1993statistical} through new lenses by applying it to an optimal allocation {  of synchronous and converter-based generation} for radial power systems, where the goal is to assign a type (DC/AC converter in closed-loop with droop control or synchronous machine) to each generation unit, by minimizing steady state angle deviations with respect to an optimum. {  Our study aims to understand the ramifications of the penetration of converter-based generation on the construction of future microgrids, where synchronous machines are gradually withdrawn and replaced by DC/AC converters with suitable control \cite{zhang2010new}. Our analysis helps for e.g., making the decision on which synchronous machines to withdraw in the electrical grid, while gradually replacing these with converters at different penetration levels \cite{markovic_stanojev_vrettos_aristidou_hug_2019}.}
Second, we investigate the robustness of the learning algorithm against uniform drop in the line susceptances, representing uncertainty in the knowledge of their exact value, while steady state power deviations are confined to well-defined feasibility region. This is performed by determining an upper bound on the allowed line susceptance drop. Additionally, we show that the learning algorithm for optimal allocation {  of generation units} converges, in the probabilistic sense, to an optimal configuration that corresponds to {  minimizers} of the perturbed potential function {  representing the overall cost function of the potential game}. 
Third, we validate our findings on a power system setup consisting of six generation units arranged according to a line graph. We simulate the power system with unperturbed as well as perturbed weight susceptances and discuss the convergence to an unknown optimal configuration for a susceptance perturbation within the derived theoretical bound.

The remainder of this paper unfurls as follows. Section~\ref{sec:learning} introduces the power systems model, as well as game-theoretic setup and formulates  optimal allocation problem for radial power systems. In Section \ref{sec: robustness}, we study the robustness of the learning algorithm with respect to uniform drop in the line susceptances, provide interpretation to our results and link these to other well-studied optimization problems. Finally, Section \ref{sec:sim} illustrates our results by providing numerical simulations of the learning algorithm for optimal allocation on a network consisting of aligned six- generation units.

\section{Learning for optimal allocation in power systems}
\label{sec:learning}
\subsection{Modeling of power systems}
We consider a power network model, defined by a graph ${G}=(\mathcal{V},\mathcal{E})$ in radial (or acyclic) undirected network {  (see \cite{schiffer2018almost, simpson2013synchronization})}, where $\mathcal{V}$ is the set of $n$ (possibly) heterogeneous generation units, {  i.e., synchronous and converter-based generation. These can also be encountered for example in generation networks undergoing construction, where synchronous machines are gradually withdrawn or microgrids, where the converters have heterogeneous control parameters.}
Let $\mathcal{E}$ be the set of $m$ edges (purely inductive transmission lines) with  susceptance weight $b_{e}>0, \, e\in\mathcal{E}$. The voltage magnitude $V_i$ at the $i$-th bus is assumed to be constant and equal to 1 per unit.
We consider inductive loads with constant susceptances, absorbed in the lines. Define $\{1,\dots, n\}$ as the index set of all the generation units in the power system.

We denote by $\mathcal{I}\in\real^{n\times m}$ the incidence matrix of the graph ${G}$ and by $\mathcal{N}_i$  the neighbor set of the $i$-th generation unit (synchronous machine \cite{kundur1994power}, or DC/AC converter in closed-loop with the droop control~\cite{simpson2013synchronization}). Under the assumption of quasi-stationary steady state, the swing equation of the $i$-th synchronous machine with {  known} inertial constant $m_i>0$ {  and} damping coefficient $d_i>0$ describes the $i$-th generation unit dynamics as follows,
\begin{align}
\label{eq:phase-i}
m_i \ddot\theta_i(t) &= -d_i \dot\theta_i(t)-\sum_{j\in \mathcal{N}_i}\, b_{ij}  \sin(\theta_i(t)-\theta_j(t))+P_{0,i},
\end{align}
where $\theta_i(t)\in\real$ denotes the (virtual) voltage phase angle, $P_{0,i}\in\real$ is a constant power input that represents mechanical or DC side power input and $P_{i}=\sum_{j\in \mathcal{N}_i} b_{ij}\sin(\theta_i(t)-\theta_j(t))$ is the electrical power injected from the $i$-th generation into the neighbor set $\mathcal{N}_i$. The dynamics in \eqref{eq:phase-i} describes both DC/AC converters that are equipped with droop control, or the rotor dynamics of synchronous machines, where the difference between the two models lies in the {  parameter} values of the damping $d_i$ and inertia $m_i$.


In the sequel, we denote the phase angles of the generation units at steady state by $\theta^s=\begin{bmatrix}\theta_1^s ,\dots, \theta_n^s\end{bmatrix}^\top$, induced by configuration (or arrangement) of the generation units $s=[s_1, \dots ,s_n]^\top$, where $s_i\in\mathcal{S}=\{M,C\}$ is the type (synchronous machine (M), or DC/AC converter (C)) of the $i$-th generation unit. The steady state angles are described by the following equation, 
\begin{align}
\label{eq: ss-eq}
d_i \dot\theta_i^s= -\sum_{j\in\mathcal{N}_i} b_{ij} \sin(\theta^s_{ij})+P_{0,i},\; \theta^s_{ij}=\theta^s_i-\theta^s_j.
\end{align} 

After defining {  the steady state power vector} $P_0=\begin{bmatrix} P_{0,1}, \dots, P_{0,n} \end{bmatrix}^\top$, {  the damping matrix} $D=\text{diag}\{d_i\}_{i=1\dots n}$, {  as well as} $\mathds{1}_n$ the vector of all ones, we make the following assumption.

\begin{assumption}[Flow feasibility \cite{simpson2013synchronization}]
\label{ass: flow-feas}
Consider the power system model at steady state in \eqref{eq: ss-eq}. We assume that
\begin{align}
\label{eq: pow-feas}
\vert \vert \mathrm{diag} (\{b_{ij}\}_{(i,j)\in\mathcal{E}})^{-1} \cdot \xi^s \vert\vert_{\infty}<1,    
\end{align}
for all configurations $s$, where $\xi^s\in\real^m$ is the vector of edge flows satisfying, 
\begin{align}
\label{eq: pow-bal}
P_{0}-\omega_0 \, D \,\mathds{1}_n= \mathcal{I}\,\xi^s,
\end{align}
with $\omega_0=\sum_{i=1}^n P_{0,i}/\sum_{i=1}^n d_i$, {  and $\norm{\cdot}_\infty$ denotes the supremum norm.}
\end{assumption}

For a given configuration $s=[s_1,\dots,s_n]^\top$ {  and} damping matrix $D$, the steady state angle {  differences} $\theta_{ij}^s$ are determined from,
\begin{align}
\label{eq: ss-angles}
\underline \sin(\mathcal{I}^\top \theta^s)=\text{diag}(\{b_{ij}\}_{(i,j)\in\mathcal{E}})^{-1} \xi^s,  
\end{align}
with $\underline \sin(z)=[\sin(z_1), \dots, \sin(z_m)]^\top$ and the edge flow vector $\xi^s$ is defined in \eqref{eq: pow-bal}.

It is important to mention that, the damping parameter of synchronous machines is considered to have higher value, than that of DC/AC converters in closed loop with droop control and enters the power balance equation \eqref{eq: pow-bal} through the damping matrix $D$. {  The values of the damping coefficients are obtained from the design characteristics of synchronous machines or converters at hand and can be tuned via torque control for machines or proportional control on the DC side for converters}. This establishes the link between a configuration $s=[s_1,\dots, s_n]$ and the {  corresponding} relative steady state angles $\theta^s_{ij}$.
{  Physically, the parametric condition \eqref{eq: pow-feas} in Assumption \ref{ass: flow-feas} states that the active power flow along each edge be feasible, i.e., less than the physical maximum.} It is equivalent to the existence of unique and synchronized steady state angles $\theta^s=[\theta_1^s, \dots, \theta_n^s]^\top$, that are phase cohesive, i.e., $\theta^s_{ij}<\gamma, \gamma \in[0,\pi/2[$ for all $i,j\in\mathcal{V}$ satisfying \eqref{eq: ss-angles}. For more details about the derivation of \eqref{eq: pow-bal} and \eqref{eq: ss-angles}, we refer the interested reader to Theorem 2 in \cite{simpson2013synchronization}.

{  Let $  \theta_{ij}^{s^*}, (i,j)\in\mathcal{E}$  denote known optimal phase angle differences corresponding to an optimal, yet unknown configuration} $[s^{\text *}_1,\dots,s^{\text *}_n],\, s^*_i \in\mathcal{S}=\{M,C\}$. These can be obtained, for example, from historical records of the optimal power system operation {of other microgrids or power generation under similar setups}.

\subsection{Game-theoretic setup} 
\label{subsec: unper-case}
Consider a game $(\mathcal{V},\mathcal{A},\{u_i\}_{i\in\mathcal{V}})$, where $\mathcal{V}$ is a finite set of players and $\mathcal{A}$ is a set of actions. Each player $i\in\{1,\dots,n\}$ has a utility function $u_i$ (a.k.a reward or payoff function), which associates with every action $x\in\mathcal{X}=\mathcal{A}^{\mathcal{V}}$ the utility $u_i(x)$ that player $i$ gets, when every other player $j\in\mathcal{N}_i$ is playing action $x_j\in\mathcal{A}$.
Let $x_{-i}=(x_1,x_2,\dots, x_{i-1},x_{i+1},\dots,x_n)$ denote the profile of all players' actions, other than player $i$.

\begin{definition}[Potential game, \cite{marden2013distributed}]
A game $(\mathcal{V},\mathcal{A},\{u_i\}_{i\in\mathcal{V}})$ is called a potential game, if there exists a function $U: \mathcal{X} \to \real$ (referred to as the potential function of the game), such that for any two configurations $x,y\in\mathcal{X}$ and a player $i\in\mathcal{V}$, we have that, 
$$u_i(y_i,x_{-i})-u_i(x_i,x_{-i})=U(y_i,x_{-i})-U(x_i,x_{-i}).$$
\end{definition}
A potential game as defined above requires perfect alignment between the global objective and the players’ local objective functions, so that the change in a player's utility that results from a unilateral change in strategy equals the change in the global utility \cite{marden2013distributed}.

\begin{definition}[Pure strategy Nash equilibrium]
\label{def: nash-eq}
A (pure strategy) Nash equilibrium for the game $(\mathcal{V},\mathcal{A},\{u_i\}_{i\in\mathcal{V}})$ is an action configuration $x^*\in\mathcal{X}$, such that, $$x^*_i\in\mathcal{B}_{i}(x^*_{-i}), \, i\in\mathcal{V},$$ where $\mathcal{B}_{i}(x_{-i})=\text{argmax}_{x_i\in\mathcal{A}} u_i(x_i,x_{-i})$ is the best response function.
\end{definition}

A pure Nash equilibrium as given in Definition \ref{def: nash-eq}, represents a configuration, in which no player has a unilateral incentive to deviate from his current action.

\begin{definition}[Noisy best response]
\label{def: noisy}
Consider a game $(\mathcal{V},\mathcal{A}, \{u_i\}_{i\in\mathcal{V}})$. The continuous-time asynchronous noisy best response dynamics is a Markov chain $X(t)$ with state space $\mathcal{X}=\mathcal{A}^{\mathcal{V}}$, where each player is equipped with an independent rate-1 Poisson clock. If the clock ticks at time $t$, the player $i$ updates his actions to $s$, chosen from a conditional probability:
\begin{align}
\label{eq: cdt-prob}
P(X_i(t+1)=s \vert \, X_i(t))=\frac{e^{1/\tau(t) \cdot u_i(s,X_{-i})}}{\sum\limits_{X_i\in\mathcal{A}_i} e^{1/\tau(t) \cdot u_i(X_i,X_{-i})}},
\end{align}
where $\tau(t)>0$ is the temperature or noise function that controls the smoothness of \eqref{eq: cdt-prob} and is a decreasing function of time \cite{marden2012revisiting}. 
\end{definition}
Note that $\tau(t)$ determines how likely is player $i$ to select a sub-optimal action: As $\tau(t)\to \infty$, player $i$ will select any unit type $X_i$ with equal probability. As $\tau(t)\to 0$, the only stochastically stable states (see Definition 4, in \cite{hasanbeig2018game}) of the Markov process are the joint actions that maximize the potential function. This shows the probabilistic convergence of the log-linear learning algorithm, and hence that of the learning algorithm for optimal allocation, to a Nash equilibrium that maximizes the potential function \cite{marden2009cooperative, marden2012revisiting}.

The algorithm described in Definition \ref{def: noisy} is known as {\em log-linear} algorithm, and is well studied in game theory and classically described by the following rules \cite{marden2012revisiting}:
\begin{itemize}
    \item Players' utility functions constitute a potential game.
    \item Players update their strategies one at a time, which is referred to as {\em asynchrony}.
    \item At any stage, a player can select any action in the action set.
    \item Each player assesses the utility for alternative actions, assuming that the actions of all other players remain fixed.
\end{itemize}

\subsection{Optimal allocation problem} 

In this section, our goal is to assign a type $s_i\in \mathcal{S}=\{M, C\}$ to each generation unit $i\in\{1\dots n\}$, represented by a Markov chain $X(t)=[X_1(t), \dots, X_n(t)]^\top$ with $X_i(t)=s_i\in \mathcal{S}$ being the generation unit type of player $i$ at time $t$. 
This assignment is performed so that, the phase angles at each unit are the closest to their given optimal relative angles $\theta_{ij}^{s^*}$, derived from an unknown optimal configuration $s^*$ of the converters and synchronous machines.
In other words, we aim to find a valid generation unit assignment $s=[s_1,\dots,s_n]^\top$ by finding $X=[X_1,\dots X_n]^\top$, so as to minimize the pairwise interaction cost,  
\begin{align}
\label{eq: cost}
c\,(X_i(t)=s_i,\, X_j(t))=\vert \sin(\theta^s_{i{j}})- \sin(\theta^{s^*}_{ij}) \vert,
\end{align}
where $\theta^s_{ij}=\theta^s_{i}-\theta^s_{j}$ and $\theta^{s^*}_{ij}=\theta^{s^*}_{i}-\theta^{s^*}_{j}$ are steady state angle relative differences, resulting from choosing the type $s_i$ for the $i$-th generation unit, given the  generation types of the neighboring units $s_j, j\neq i$. 
 
The {  interaction} cost  in \eqref{eq: cost} depends implicitly on the configuration $X=s,\, s_i \in\mathcal{S}, \, i=1\dots n$, through \eqref{eq: pow-bal} and \eqref{eq: ss-angles}.

At every time instance $t$, one generation unit $g(t)\in\{1\dots n\}$ chosen uniformly at random wakes up and updates its type $s_i\in\mathcal{S}$. The conditional probability of updating the $i$-th generation unit to type $s_i$  is given by, 
\begin{align}
\label{eq: cond-opt-share}
P(X_i(t+1)&=s_i \; \vert \; X(t),\, g(t)=i) \\
&=\!\frac{\!\! e^{-1/\tau(t) \sum_{j\in\mathcal{N}_i} b_{ij} \,c\,(s_i,X_j(t))}}{\!\!\sum\limits_{k\in \mathcal{S}} e^{-1/\tau(t)\sum_{j\in\mathcal{N}_i} b_{ij} \,c\,(k, X_j(t))}}, \nonumber
\end{align}
where the {  temperature} function $\tau(t)$ is defined as in \eqref{eq: cdt-prob}.

We assign to each player an objective function that captures the player's marginal contribution to the potential function. This translates to assigning to each player the following utility function, 
\begin{align}
\label{eq:utility}
u_i(X_i,X_{-i})=-\sum\limits_{j\in\mathcal{N}_i} b_{ij} \,c\,(X_{i},X_j).
\end{align}

Next, we define the potential function given by, 
\begin{align}
\label{eq:pot-fcn}
U(X)=-\frac{1}{2}\sum\limits_{\substack{i,j\in\mathcal{V}, \\ (i,j)\in\mathcal{E}}} b_{ij}\, c\,(X_i(t), X_j(t)).   
\end{align}

Following Assumption \ref{ass: flow-feas}, the aforementioned potential function \eqref{eq:pot-fcn} achieves zero if and only if all generation units are aligned with an optimal configuration $s^*=[s_1^*,\dots,s_n^*]^\top$ of converters and synchronous machines. This can be seen from,
$$\vert \sin(\theta^s_{i{j}})-\sin(\theta^{s^*}_{ij})\vert =0,$$
if and only if $\theta^s_{ij}=\theta^{s^*}_{ij}$, for all $i=1,\dots, n$. In fact, given optimal steady state angle  $\theta^{s^*}$, we can write \eqref{eq: pow-bal} as $$D^*\,\mathds{1}_n=\frac{1}{\omega_0} \left[P_0-\mathcal{I}\, \text{diag} (\{b_{ij}\}) \sin(\mathcal{I}^\top \theta^{s^*}) \right]$$ and thus deduce from {  the optimal arrangement of damping coefficients $  D^*\,\mathds{1}_n=[d^*_1,\dots, d^*_n]^\top$, the  corresponding} optimal configuration $s^*$.

Note that an optimal configuration $s^*=[s_1^*,\dots s_n^*]^\top$ is a Nash equilibrium of the game, given by the utility function \eqref{eq:utility}, because it maximizes the potential function \eqref{eq:pot-fcn}. However, a Nash equilibrium $X^*=[X^*_1,\dots, X^*_n]^\top$ can be sub-optimal, i.e., $U(X^*)<0$, and hence may fail to correspond to an optimal configuration {  and thus may not be a maximizer of the potential function \eqref{eq:pot-fcn}}. 


\section{Robustness of optimal allocation problem}
\label{sec: robustness}
\subsection{Drop in transmission line susceptances}
In this section, we investigate the implication of an additive unknown perturbation in transmission line susceptances on the probabilistic convergence of the log-linear algorithm and its robustness with respect to a well-defined feasibility region of admissible power flows.

Under these settings, we define the utility function for the $i$-th generation unit as follows,
\begin{align}
\label{eq: pert-u} 
\hat u_i(X)=-\sum\limits_{j\in\mathcal{N}_i} \hat b_{ij} (\delta)\, c(X_i(t), X_j(t)),
\end{align}
 where $\hat b_{ij}(\delta)= b_{ij}-\delta,\,0 \leq \delta<b_{ij}$, as well as the perturbed potential function, 
\begin{align}
\label{eq:pert-pot}
\hat U(X)=-\frac{1}{2}\sum\limits_{\substack{i,j\in\mathcal{V}, \\ (i,j)\in\mathcal{E}}} \hat b_{ij}(\delta)\, c\,(X_i(t), X_j(t)).   
\end{align}

Hence, $\delta$ represents a uniform drop in the line susceptances so that the perturbed weights are still positive \cite{ding2017impact}. A drop in the susceptance value models {  commonly encountered} uncertainty in the knowledge about the {  exact values of line parameters in power systems \cite{jones2004estimation, guo2014adaptive}}.

In the sequel, we assume that Assumption 1 holds with perturbed weights, for all $0\leq \delta< b_{ij},\, (i, j)\in\mathcal{E}$ and configurations $s$., i.e., 
\begin{align}
\label{eq: flow-fea-per}
\norm{\text{diag}(\hat b_{ij}(\delta))^{-1} \xi^{s}}_\infty <1,   
\end{align}
where $\xi^{s}$ represent edge flow vector that solves the power balance equations \eqref{eq: ss-angles} with the perturbed weights $\hat b_{ij}>0$.

Let $\theta_{ij}^s$ denote the relative steady state angles resulting from solving for 
the power balance equations \eqref{eq: pow-bal}, \eqref{eq: ss-angles} with the perturbed line susceptances $\hat b_{ij}$, for all $i,j \in\mathcal{V}$.
Next, we introduce a set of steady state angles $\theta^s$, characterizing a feasibility region described by the maximal steady state power injected by each of the converters, relative to their optimal power output, i.e., corresponding to an optimal configuration $s^*$ of the unperturbed susceptances $b_{ij}$ as follows,
\begin{align} 
\label{eq: fea-region}
&\mathcal{P}^\alpha( \theta^s)=\left\{\theta^s \in\real^n,  \right.\\
 &\left.\max\limits_{i=1\dots n}\vert \sum\limits_{j\in\mathcal{N}_i} \hat b_{ij}(\delta) \sin( \theta^s_{ij})- \sum\limits_{j\in\mathcal{N}_i} b_{ij} \sin(\theta_{ij}^{s^*})\vert< \alpha \right\},\nonumber
\end{align}
where $\sum_{j\in\mathcal{N}_i} \sin(\theta_{ij}^s)>0$ for all $s$ (by phase angle cohesiveness of $\theta_{ij}^s$). We choose $\alpha>0$, so that, $\max\limits_{i=1\dots n}\left\vert \sum\limits_{j\in\mathcal{N}_i} b_{ij} \left(\sin(\theta_{ij}^s)-\sin(\theta_{ij}^{s{^*}})\right)\right\vert <\alpha$, for all~$s$. {  Thus, for the unperturbed susceptances ($\delta=0$), the steady state angles pertain to the feasibility region \eqref{eq: fea-region}.} 
In fact, for all feasible {  power} flows (satisfying Assumption \ref{ass: flow-feas}) the region $\mathcal{P}^\alpha$ in \eqref{eq: fea-region} can also be expressed in terms of the edge flows as $\{  \xi^s \in\real^m,\, \vert \vert \mathcal{I}( \xi^{s}-\xi^{s^*})\vert \vert_\infty <\alpha\}$.
\begin{definition}
The optimal allocation robustness margin is defined by, 
\begin{align}
\label{eq: rob-marg}
 \overline{R}^s= \inf \{\delta,  \; \theta^s \not\in \mathcal{P}^\alpha( \theta^s)\}.
\end{align}
\end{definition}
This definition implies that the robustness margin is defined by the smallest susceptance drop $\delta$, that steers the steady state angles $\theta^s$, outside of their feasiblity region $\mathcal{P}^\alpha$. At this stage, we ask two fundamental questions: 
\begin{itemize}
  \item For $\tau(t) \to 0$, does the distributed learning algorithm for optimal allocation by means of the perturbed utility function \eqref{eq: pert-u}, converge in probability towards maximizers of the perturbed potential function \eqref{eq:pert-pot}?
\item Can we identify the robustness margin $\overline R^s$ and determine its dependence on network parameters/configuration?
  \end{itemize}
 
We provide answers to these questions in the following theorem.
\begin{theorem}
\label{thm: main}
{  Let $\alpha>0$ be given as in \eqref{eq: fea-region}}. Consider the power system at steady state~\eqref{eq: ss-eq} with perturbed weights $\hat b_{ij}=b_{ij}-\delta$ and the utility function \eqref{eq: pert-u}. {  Assume that \eqref{eq: flow-fea-per} holds and define,}
 \begin{align}
 \label{eq: upper-r}
    \overline R^s&=\min_{i=1,\dots, n} \frac{\alpha+\sum\limits_{j\in\mathcal{N}_i} \left(b_{ij} (\sin(\theta^s_{ij})-\sin(\theta^{s^*}_{ij}))\right)}{\sum\limits_{j\in\mathcal{N}_i} \sin( \theta_{ij}^s)}.
    \end{align} 

Additionally, consider the {  interaction} cost \eqref{eq: cost} with optimal relative angles, corresponding to unknown configuration of the power system with perturbed weights $\hat b_{ij}$. Then, the following holds:
\begin{enumerate}
  \item The distributed learning algorithm converges as $\tau(t) \to 0$, to the uniform probability over the set of best responses, which are maximizers of the perturbed potential function \eqref{eq:pert-pot}.
    \item For every configuration $s$, the power system model is robust against {  uniform} perturbations in the susceptances with a robustness margin given by $\overline R^s$ in \eqref{eq: upper-r}, if and only if $\delta< \overline R^s$.
 \end{enumerate}

\end{theorem}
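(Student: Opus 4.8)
The plan is to establish the two claims separately. For claim~1, I would first check that the perturbed game with utilities $\hat u_i$ in \eqref{eq: pert-u} is an exact potential game with potential $\hat U$ in \eqref{eq:pert-pot}: a unilateral change of player $i$'s type affects only the edges incident to $i$, each of which is counted twice in the symmetric sum defining $\hat U$ so that the factor $\tfrac12$ cancels, whence $\hat u_i(y_i,x_{-i})-\hat u_i(x_i,x_{-i})=\hat U(y_i,x_{-i})-\hat U(x_i,x_{-i})$, treating $c(X_i,X_j)$ as the pairwise interaction term along edge $(i,j)$ as in \eqref{eq: cost}. With this structure in place, the noisy best response dynamics of Definition~\ref{def: noisy} driven by $\{\hat u_i\}$ is a reversible Markov chain whose unique stationary distribution is the Gibbs measure $\mu_\tau(X)\propto\exp(\hat U(X)/\tau)$; as $\tau(t)\to 0$, $\mu_\tau$ converges to the uniform distribution supported on $\arg\max_X\hat U(X)$. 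Finally I would record that every maximizer of $\hat U$ is a best response: if $X^*$ maximizes $\hat U$, then for any player $i$ and any action $a\in\mathcal A$ the potential identity gives $\hat u_i(a,X^*_{-i})-\hat u_i(X^*)=\hat U(a,X^*_{-i})-\hat U(X^*)\le 0$, so $X^*_i\in\mathcal B_i(X^*_{-i})$. This is the classical log-linear learning statement for potential games, which I would invoke rather than reprove the reversibility and the zero-temperature limit in detail.

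For claim~2, the task is to turn the membership $\theta^s\in\mathcal P^\alpha(\theta^s)$ of \eqref{eq: fea-region} into an explicit bound on $\delta$. Substituting $\hat b_{ij}(\delta)=b_{ij}-\delta$ and regrouping, $\theta^s\in\mathcal P^\alpha(\theta^s)$ is equivalent to
\[
\max_{i=1,\dots,n}\Bigl|\,\sum_{j\in\mathcal N_i}b_{ij}\bigl(\sin(\theta^s_{ij})-\sin(\theta^{s^*}_{ij})\bigr)-\delta\sum_{j\in\mathcal N_i}\sin(\theta^s_{ij})\,\Bigr|<\alpha .
\]
Writing $E_i=\sum_{j\in\mathcal N_i}b_{ij}(\sin(\theta^s_{ij})-\sin(\theta^{s^*}_{ij}))$ and $A_i=\sum_{j\in\mathcal N_i}\sin(\theta^s_{ij})>0$ (positivity by phase cohesiveness of $\theta^s$), the $i$-th term reads $|E_i-\delta A_i|<\alpha$, i.e. $\tfrac{E_i-\alpha}{A_i}<\delta<\tfrac{E_i+\alpha}{A_i}$. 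The choice of $\alpha$ in \eqref{eq: fea-region} gives $|E_i|<\alpha$, so $E_i-\alpha<0\le\delta$ and the left inequality is automatic; what survives is $\delta<\tfrac{\alpha+E_i}{A_i}$ for every $i$, that is $\delta<\min_i\tfrac{\alpha+E_i}{A_i}=\overline R^s$, which is exactly \eqref{eq: upper-r}. Since $\alpha+E_i>0$ and $A_i>0$ we have $\overline R^s>0$; taking the infimum over the $\delta\ge 0$ for which membership fails returns the margin \eqref{eq: rob-marg}, and robustness of configuration $s$ holds if and only if $\delta<\overline R^s$.

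I expect the principal difficulty to be bookkeeping rather than computation, namely keeping track of where the perturbation $\delta$ actually enters. The edge-flow vector $\xi^s$ solving the power balance \eqref{eq: pow-bal} is fixed by the incidence matrix of the radial graph and is independent of the susceptances, hence of $\delta$; the reference terms $\sum_{j\in\mathcal N_i}b_{ij}\sin(\theta^{s^*}_{ij})$ are fixed by the unknown optimal configuration; and the only $\delta$-dependence in the membership condition is the explicit factor $\hat b_{ij}(\delta)=b_{ij}-\delta$ multiplying $\sin(\theta^s_{ij})$. Keeping this straight, together with correctly using the sign information $|E_i|<\alpha$ built into the choice of $\alpha$ (the same fact that makes the unperturbed case $\delta=0$ feasible), is the delicate point; the remainder is elementary linear algebra, and the convergence assertion in claim~1 is a citation to the established log-linear learning theory for potential games.
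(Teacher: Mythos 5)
Your proof of claim~2 is essentially the paper's own argument: the paper works from non-membership, $\theta^s\notin\mathcal P^\alpha$, substitutes $\hat b_{ij}=b_{ij}-\delta$, obtains the two alternatives $\delta\geq(\alpha+E_i)/A_i$ or $\delta\leq(-\alpha+E_i)/A_i$, and discards the second by the choice of $\alpha$ (i.e.\ $|E_i|<\alpha$) together with $\delta\geq 0$; you phrase the same algebra from the membership side and arrive at the identical bound $\delta<\min_i(\alpha+E_i)/A_i=\overline R^s>0$, using the same positivity assertion $A_i=\sum_{j\in\mathcal N_i}\sin(\theta^s_{ij})>0$ and treating $\sin(\theta^s_{ij})$ as fixed when solving for $\delta$, exactly as the paper does. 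For claim~1, however, you take a genuinely different route. The paper does not use the reversibility/Gibbs-measure argument: it adapts the resistance-tree machinery of Marden--Shamma (Lemmas 3.1, 3.2 and Proposition 3.1 of the cited reference), defining $\epsilon=e^{-1/\tau}$ and a mixed resistance $r(X\to X')=\max_{X_i}u_i(X)-\hat u_i(X')$ that combines the unperturbed and perturbed utilities, and verifies the resistance is well defined because $\delta\geq 0>\underline R^s$; stochastic stability of the $\hat U$-maximizers then follows from the cited results. Your argument instead verifies that $\{\hat u_i\}$ is an exact potential game for $\hat U$ and invokes the classical log-linear result (Gibbs stationary measure $\propto e^{\hat U/\tau}$, zero-temperature limit uniform on $\arg\max\hat U$). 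This is cleaner and fully adequate if, as the theorem statement suggests, the update probabilities are driven exactly by the perturbed utilities $\hat u_i$; the paper's mixed-resistance computation is there precisely to accommodate a discrepancy between the utilities appearing in the update rule and the perturbed potential, which your route would not cover if that mismatch is the intended reading. One further caveat applies to both your argument and the paper's: with a time-varying temperature $\tau(t)$ the chain is inhomogeneous, so the stationary-Gibbs (or stochastic-stability) limit statement strictly requires a condition on the cooling schedule, which neither you nor the paper makes explicit.
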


\begin{proof}
Consider the cost function \eqref{eq: cost} with optimal steady state angles, that solve \eqref{eq: pow-bal} and \eqref{eq: ss-angles} with the perturbed weights $\hat b_{ij}=b_{ij}-\delta$. To prove the first statement, we follow the same lines of the proof of Proposition 3.1 from \cite{marden2012revisiting}. For this, we adopt the proof of Lemma 3.1 to our setup as follows:
The perturbed learning algorithm for optimal allocation induces a finite, irreducible, aperiodic process over the state space. By introducing $\epsilon=e^{-1/\tau}$, we denote by $P^\epsilon$ the corresponding transition matrix. 

By using the perturbed utility function in \eqref{eq: pert-u}, we have
\begin{align}
\lim_{\epsilon\to 0} \frac{P_{X\to X'}^\epsilon}{\epsilon^{\max_{X_i} u_i(X)-\hat u_i(X')}}&= \frac{1}{n} \frac{1}{\sum\limits_{X_i\in \mathcal{A}_i} \epsilon^{\max_{X_i} u_i(X)-\hat u_i(X)}}<\infty,
\end{align}
with $X=(X_i,X_{-i}),\, X'=(X'_i,X_{-i})$, if and only if, $\max\limits_{X_i} u_i(X)-\hat u_i(X)\geq 0$. This means that, for all $X_i\notin \mathcal{B}_i(X_{-i})$, we have 
\begin{align}
\frac{u_i(X)-\max\limits_{X_i} u_i(X)}{\sum\limits_{j\in\mathcal{N}_i} c(X_i,X_j)}<  \delta, 
\end{align}
and taking the {  maximum}, yields the lower bound,
\begin{align}
\underline R^s=\max_{i=1,\dots,n}\frac{u_i(X)-\max\limits_{X_i} u_i(X)}{\sum\limits_{j\in\mathcal{N}_i} c_i(X_i,X_j)}< 0.
    \end{align}
Since $0 \leq \delta$, this shows that $\underline R^s < \delta$.    
Hence, Lemma 3.1 holds with the resistance $r(X\to X^{'})=\max_{X_i} u_i(X)-\hat u_i(X')$. Together with Lemma 3.2, Proposition 3.1 in \cite{marden2012revisiting} implies that stochastically stable states are the set of perturbed potential function maximizers given in \eqref{eq:pert-pot} .

To prove the second claim, we note that $ \theta^s\notin \mathcal{P}^\alpha$ in \eqref{eq: fea-region}, if and only if, 
\begin{align*}
\vert \sum_{j\in\mathcal{N}_i} \hat b_{ij}(\delta) \sin(\theta^s_{ij})- \sum_{j\in\mathcal{N}_i} b_{ij}\sin(\theta^{\text s^*}_{ij})\vert \geq \alpha,    \end{align*}
for some $i=1\dots n$. From $\hat b_{ij}=b_{ij}-\delta$, we solve for $\delta$, the resulting inequality, 
$$ \vert \sum_{j\in\mathcal{N}_i} b_{ij}(\sin( \theta^s_{ij})-\sin(\theta^{\text s^*}_{ij})) -\delta \sum_{j\in\mathcal{N}_i} \sin( \theta^s_{ij})\vert\geq \alpha.$$
This shows that,  $$\delta\geq \frac{\alpha+\sum\limits_{j\in\mathcal{N}_i} \left[b_{ij} (\sin(\theta^s_{ij})-\sin(\theta^{s^*}_{ij}))\right]}{\sum\limits_{j\in\mathcal{N}_i}  \sin( \theta_{ij}^s)},$$ or  $$\delta\leq \frac{-\alpha+\sum\limits_{j\in\mathcal{N}_i} \left[b_{ij} (\sin(\theta^s_{ij})-\sin(\theta^{s^*}_{ij}))\right]}{\sum\limits_{j\in\mathcal{N}_i}  \sin( \theta_{ij}^s)}.$$
By choice of $\alpha$ and definition of the robustness margin in \eqref{eq: rob-marg}, only the first inequality holds and we find {  the upper bound in}  \eqref{eq: upper-r} with $\overline R^s>0$.
\end{proof}

\subsection{Discussion}
\label{subsec:disc}
\begin{enumerate}
\item Theorem 1 establishes a robustness margin $\overline R^s$ with respect to a uniform drop in the line susceptance.
The robustness margin $\overline R^s$ depends on the configuration $s$ and on admissible maximal deviation of steady state electrical power from its optimal value $  \alpha$ described in \eqref{eq: fea-region}.
Note that, optimal steady state angles of the power system with perturbed susceptances are in general unknown. In this case, the learning algorithm with perturbed susceptances does not converge to a maximizer of $\hat U(t)$. See later Section~\ref{sec:sim}.
\item A suitable choice of the temperature or noise function $\tau(t)$, as a  decreasing function of time, is crucial for the improvement of the algorithm convergence properties. The function $\tau(t)$ {  determines} the exploration rate of the learning algorithm \cite{hasanbeig2018game}.
\item The convergence in unperturbed (see Section \ref{subsec: unper-case}) as well as perturbed case (in Theorem~\ref{thm: main}) is understood only in a probabilistic sense, that is, a Markov chain $X(t)$ itself can converge to a Nash equilibrium that is {\em not a maximizer} of the potential function $U(X)$. Hence, the potential function $U(X)$ is not always necessarily zero at the end of the learning algorithm.
\item The optimal allocation {  of synchronous and converter-based generation} can be regarded in a broader perspective as a resource allocation problem~\cite{marden2013distributed}, where the cost function {  \eqref{eq:utility}} represents the welfare function. This can also be formulated as a task assignment \cite{gong2018task,qu2019distributed} for dynamic multi-agent networks, where each node selects its task, i.e., type from an admissible set and the cost function is interpreted as utility of an assignment profile for each node with a specific role.  
\end{enumerate}

\section{Simulations}
\label{sec:sim}
To illustrate the distributed learning for optimal allocation of {  synchronous and converter-based generation}, we study an example of power systems represented by a line graph of six generation units and associated with power input vector { (in p.u.)} $P_0=[0.77778, 0.7, 0.798889, 0.7,0.798889, 0.7]^\top$. The susceptances $\{b_{ij}\}_{(i,j)\in\mathcal{E}}$ are given {  (in p.u.)} by the matrix $\Gamma=\text{diag}\left\{[15.2631,4.2350, 4.8156,15.2631,4.2350]\right\}$. The values are taken from the IEEE 14-bus system \cite{4075293}.

The goal is to assign a generation type $s_i\in\mathcal{S}$ to each generation unit $i\in\{1,\dots,6\}$. The optimal yet unknown configuration $s^*$, shown in Figure~\ref{fig:setup}, {  is given by} optimal steady state angles, which are minimizers of the potential function $U(t)$ in \eqref{eq:pot-fcn}. The optimal relative angle differences $\{\theta^{s^*}_{ij}\}_{i,j\in\mathcal{V}}$ are given by $[-0.0157, -0.0354,  0.0081, {  0.0084} , 0.0750]^\top$. 
The distributed learning algorithm for optimal allocation aims to correctly assign a type (synchronous machine or DC/AC converter in closed-loop with droop control) and thus decide on which synchronous machine to withdraw and replace with DC/AC converter. The optimal configuration $s^*$, based on the given optimal angle differences $\{\theta^{s^*}_{ij}\}_{i,j\in\mathcal{V}}$.

\begin{figure}[h!]
    \centering
    \includegraphics[scale=0.2, trim=0cm 9cm 0cm 7cm, clip=true]{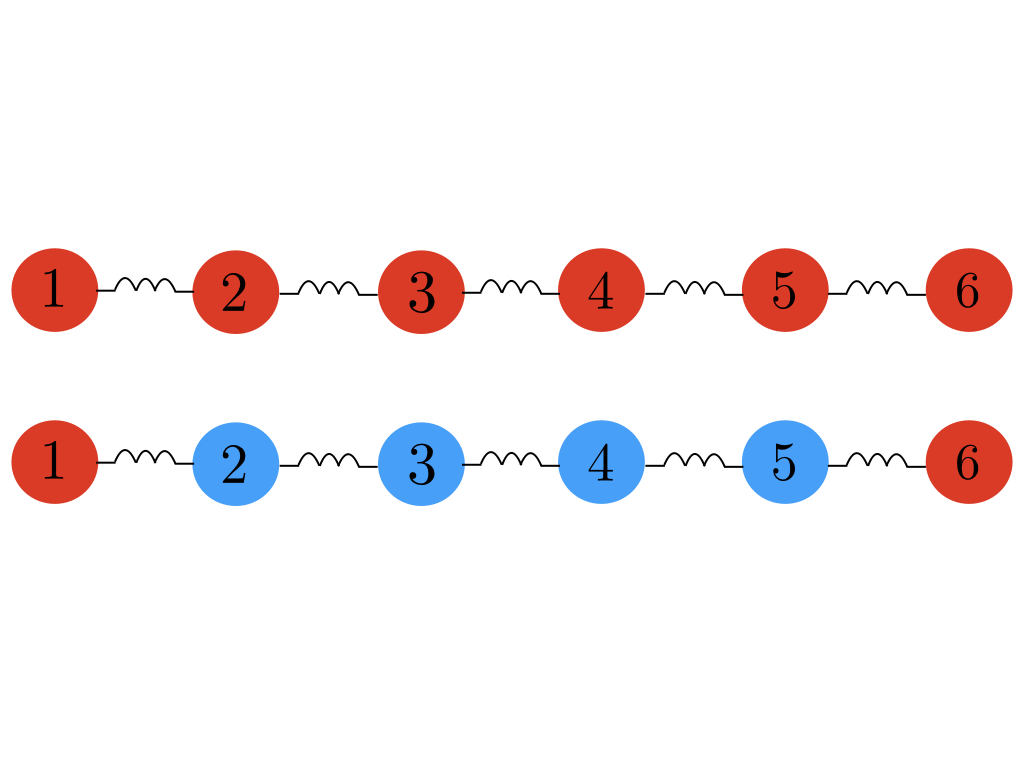}
    \caption{{\em Top}: Power system represented by a line graph consisting of six synchronous machines connected via inductive lines with susceptances $b_{ij}>~0$. {\em Bottom}: Unknown optimal configuration $s^*=[{M, C,C, C, C,M}]^\top$ corresponding to given optimal relative angles $\{\theta^*_{ij}\}_{i,j\in\mathcal{V}}$. The color {\em red} refers to a synchronous machine and {\em blue} to DC/AC converters (in closed-loop with droop control).}
    \label{fig:setup}
\end{figure}

{  For this}, we choose between two colors: {\em red} for synchronous machines $(M)$ and {\em blue} for DC/AC converters $(C)$ with respective damping values (in p.u.) $d_C=15$ and $d_M=25$. At initialization, all generation units are assumed to be synchronous machines, so that $X_i(t)=M, i=1,\dots,n$. We begin with the unperturbed ($\delta=0$) cost function~\eqref{eq:utility}. At each time $t>0$, a generation unit $i\in U(t)$ is chosen uniformly (at random) and updates its generation type $s_i$ (and hence color), according to the conditional probability \eqref{eq: cond-opt-share}. In Figure \ref{fig:potential-fcn}, we plot the time evolution of the potential function in \eqref{eq:pot-fcn}, for two different realizations of the inverse of the temperature function $\eta(t)=1/\tau(t)$. Note that an increase in the slope of $\eta(t)$ is accompanied by an increase in the convergence rate to an optimal configuration. 
\begin{figure}[h!]
    \centering
    \includegraphics[scale=0.4]{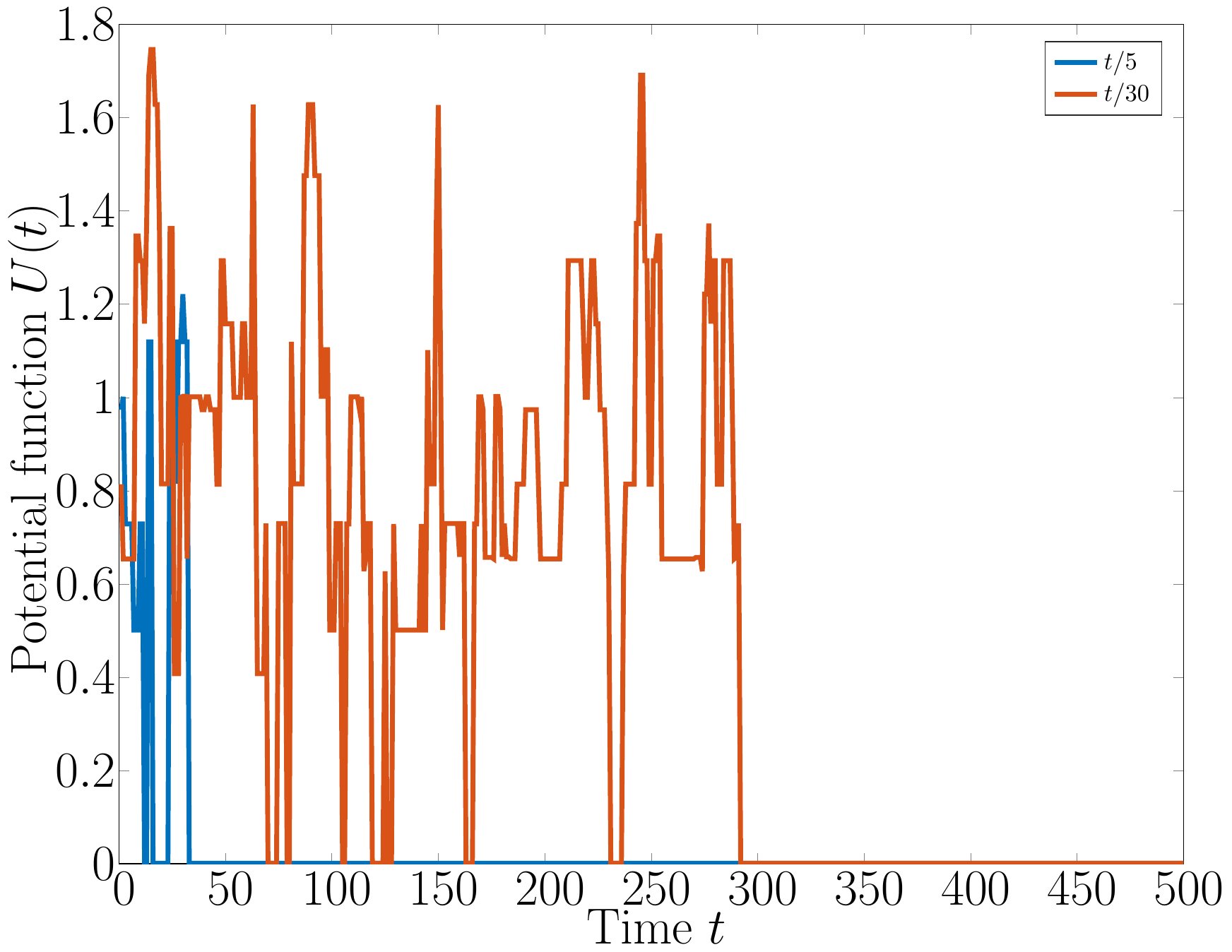}
    \caption{Evolution of the {  unperturbed} potential function $U(t)$ in \eqref{eq:pot-fcn} corresponding to two different realizations of the inverse of the temperature function $\eta_1(t)=t/5$ and $\eta_2(t)=t/30$.} 
    \label{fig:potential-fcn}
\end{figure}

For $\eta(t)=t/5$, we consider the {  perturbed} distributed learning algorithm. We calculate $$\max_{\substack{s_i\in\mathcal{S}\\ i=1\dots n}}\max\limits_{i=1\dots n}\vert \sum\limits_{j\in\mathcal{N}_i} b_{ij}( \sin(\theta^s_{ij})-\sin(\theta_{ij}^{s^*})) \vert=0.8142,$$ and hence pick $\alpha=1.5>0.8142$, so that $\theta^s\in \mathcal{P}^\alpha$ in \eqref{eq: fea-region}, for all configurations $s$. This choice accounts for a large margin of admissible power deviations induced by bounded disturbances that might affect power system operation \cite{kundur1994power}.

Next, we perturb the susceptance values given by $\hat b_{ij}=b_{ij}-\delta$, where $0\leq\delta<b_{ij}$ is a uniformly randomly generated perturbation. The upper bound on the susceptance drop $\delta$ from Theorem \ref{thm: main} is given by $\min\limits_{\substack{s, s_i\in\mathcal{S}\\ i=1,\dots, n}}\overline R^s= 0.4723$. In this case, the resulting steady state angles remain inside the feasibility region $\mathcal{P}^\alpha$.
Since the optimal configuration $s^*=[s_1^*,\dots, s_n^*]$ in Figure~\ref{fig:setup}, corresponds to steady state angles \eqref{eq: pow-bal}, \eqref{eq: ss-angles} with unperturbed line susceptances $b_{ij}$, Figure \ref{fig:pert-pot} shows the perturbed potential function \eqref{eq:pert-pot} (for $\delta=0.3065$) that does not converge to zero, {  that is} a Nash equilibrium that is not a minimizer of the perturbed potential in \eqref{eq:pert-pot}.

\begin{figure}[h!]
    \centering
    \includegraphics[scale=0.4]{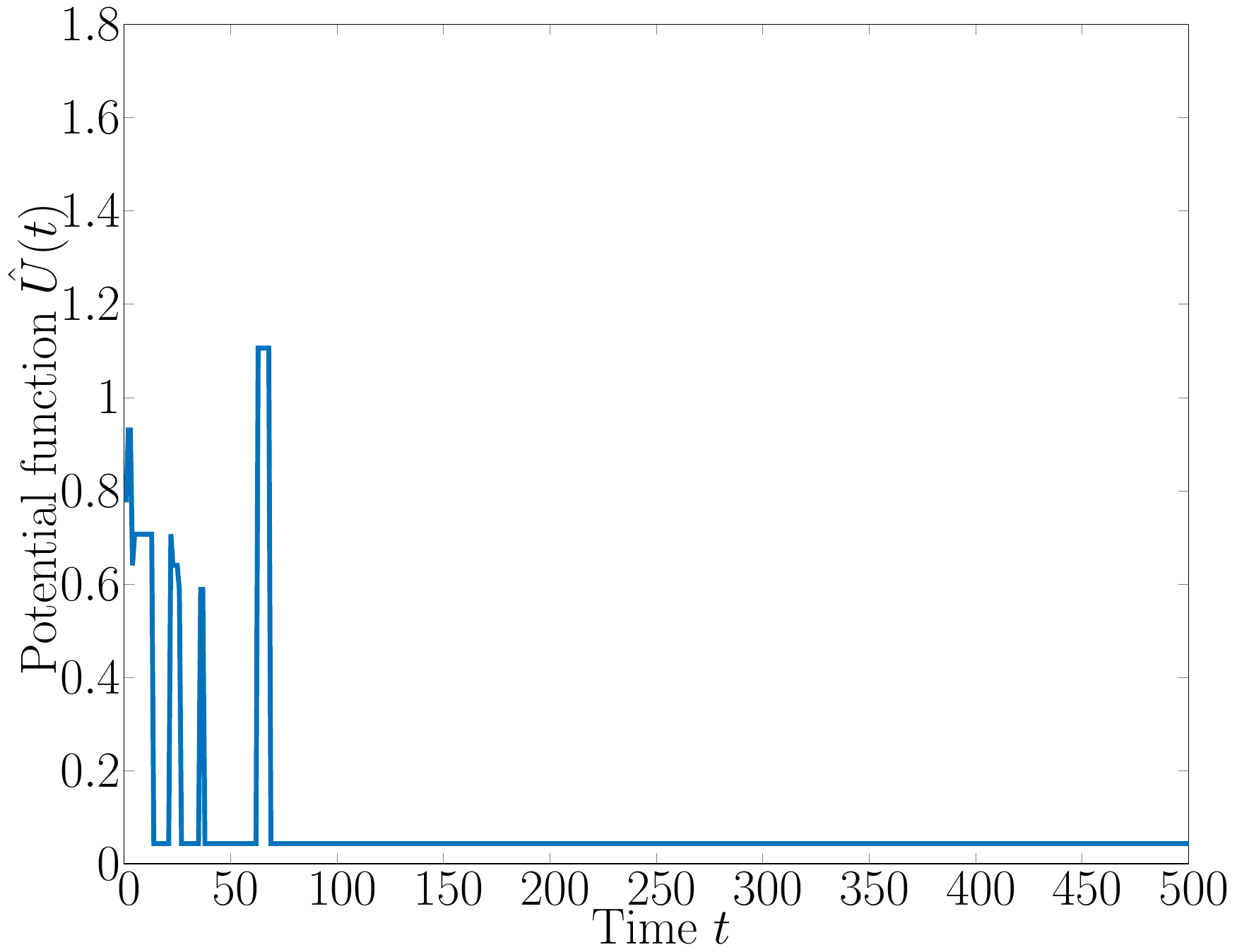}
    \caption{Convergence of the perturbed potential function $\hat U(t)$ in \eqref{eq:pert-pot} for $\delta=0.3065$ to a non-zero value corresponding to a configuration of a Nash equilibrium, that is {not} a {  minimizer} of the perturbed potential function.}
    \label{fig:pert-pot}
\end{figure}

\section{CONCLUSION}
We revisited a distributed learning algorithm for optimal allocation {  of synchronous and converter-based} generators in radial power systems based on log-linear learning with guaranteed probabilistic convergence to Nash equilibrium. Moreover, we investigated its robustness against drops in the line susceptances with respect to {  a} feasible region of power deviations. Future investigations involve the {  study of general network topology for mixed power generation.}





\section*{ACKNOWLEDGMENT}
The authors thank Dr. Emma Tegling for the insightful comments and discussions.

\bibliographystyle{IEEEtran}
\bibliography{root.bib}

\end{document}